\newcommand{\eBox}{$\hfill\square$}
\newcommand{\bu}{\mathbf{u}}
\newcommand{\bv}{\mathbf{v}}
\newcommand{\bq}{\mathbf{q}}
\newtheorem{defn}{Definition}
\newtheorem{exmp}[defn]{Example}
\newtheorem{prop}[defn]{Proposition}
\newtheorem{rem}[defn]{Remark}
\begin{document}

\title{Towards Velocity Turnpikes in \\ Optimal Control of Mechanical Systems\footnote{Timm Faulwasser, Sina Ober-Bl\"{o}baum, and Karl Worthmann are indebted to the German Research Foundation (DFG-grant WO\, 2056/4-1 and WO\, 2056/6-1). Moreover, Kathrin Fla\ss{}kamp, Sina Ober-Bl\"{o}baum, and Karl Worthmann gratefully acknowledge funding by \textit{Mathematisches Forschungsinstitut Oberwolfach}.}}

\author{Timm Faulwasser\footnote{T.~Faulwasser,
			Institute for Automation and Applied Informatics,
			Karlsruhe Institute of Technology (KIT), Germany,
			e-mail: timm.faulwasser@ieee.org}
\and
Kathrin Fla\ss{}kamp\footnote{K. Fla{\ss}kamp, 
              Center for Industrial Mathematics,
              University of Bremen, Germany,
              email: kathrin.flasskamp@uni-bremen.de}
\and
Sina Ober-Bl\"{o}baum\footnote{S. Ober-Bl\"obaum,
             Department of Engineering Science,
             University of Oxford, United Kingdom,
             email: sina.ober-blobaum@eng.ox.ac.uk}
\and
Karl Worthmann\footnote{K. Worthmann,
          Institut f\"{u}r Mathematik,
         Technische Universit\"{a}t Ilmenau, Germany,
        email: karl.worthmann@tu-ilmenau.de}}

\maketitle

\begin{abstract}
	The paper proposes first steps towards the formalization and characterization of time-varying turnpikes in optimal control of mechanical systems. %
	We propose the concepts of velocity steady states, which can be considered as partial steady states, and hyperbolic velocity turnpike properties for analysis and control. %
	We show for a specific example that, for all finite horizons, %
	both the (essential part of the) optimal solution and the orbit of the time-varying turnpike correspond to (optimal) trim solutions. %
	Hereby, the present paper appears to be the first to combine the concepts of trim primitives and time-varying turnpike properties.
\end{abstract}


\section{Introduction}
Nowadays optimal and predictive control are established methods that are applied to a wide range of control problems in mechanics, mechatronics and robotics. After the seminal conceptual breakthroughs around the mid of the 20th century the  main driving force of this trend has been the development of powerful numerical methods. 

Just recently the analysis of parametric Optimal Control Problems (OCPs) --~e.g.\, the ones arising in model predictive control~-- has seen renewed interest in the concept of turnpike properties, cf.\, \cite{Stieler14a,Trelat15a,epfl:faulwasser15h,kit:faulwasser18c}. 
Turnpikes are a classical topic, originating from the analysis of problems arising in economics and subsequently found to be ubiquitous in many application areas of optimal control. 
It refers to a similarity property of parametric optimal control problems, whereby the boundary conditions of the dynamics and the horizon length are varied. Originally, the term turnpike has been coined by \cite{Dorfman58} and popularized by \cite{Mckenzie76} and \cite{Carlson91}; early reports of turnpike phenomena can be traced back to \cite{vonNeumann38}.

The majority of recent works on turnpikes in optimal control focuses on steady-state turnpikes \cite{Trelat15a,Gugat16,epfl:faulwasser15h}, i.e.\, on problems where the turnpike can be understood as the steady-state attractor of infinite-horizon optimal solutions. However, it has been well understood in economics that turnpikes can as well be time-varying orbits \cite{Samuelson76, Zaslavski06}. %
Recently, non-periodic time-varying turnpikes in general OCPs have been studied by~\cite{Gruene17b}. %
While steady-state turnpikes are elegant in the sense that they are obtained by computing the optimal steady state of the system --~i.e.\, by solving a simple NLP~-- so far it is not clear how to compute a non-periodic time-varying turnpike orbit directly without solving a large number of OCPs. 

In the present paper, we aim at characterizing non-periodic turnpike orbits for a class of OCPs arising in mechanical systems. Specifically, we exploit symmetries and the concept of trim primitives \cite{Frazzoli2001,FrazBull02}. %
The symmetries we consider can be represented by Lie groups which induce invariances, i.e.\, the system dynamics are invariant w.r.t.\, the corresponding symmetry actions. %
For mechanical systems with translational or rotational symmetries, this means that translations or rotations of a trajectory lead to another trajectory of the control system. Hence, a solution trajectory that has been designed for one specific situation can be (re-)used for another situation as well which defines an equivalence class of solutions whose representative is called a primitive. %
Particularly, induced by symmetries there may exist trim primitives or trims, for short, (see~\cite{FrDaFe05}) which are basic motions, e.g.\, going straight at constant speed or turning with constant rotational velocity in mechanical systems. %
Trims can be represented very conveniently with Lie group actions, even if general solutions of the dynamical systems cannot be computed by hand. So far, the concept of (trim) primitives has been widely used in motion planning for (hybrid) dynamical systems 
\cite{FrDaFe05,FOK12,FHO15}. 
In principle, primitives are used to build up a library of solutions for intermediate optimal control problems. Dependent on the specific control scenario, an optimal path can be searched for in this motion library very quickly. This allows for solving optimal control problems very effectively online.
However, the relation of trim primitives to turnpikes has not been established yet.

Summing up, the main contribution of the present paper are first steps towards the formalization and characterization of time-varying turnpikes for a rather broad class of OCPs arising in control of mechanical systems. To this end, we propose the concept of \emph{hyperbolic velocity turnpike properties}, which are slightly more general than exponential turnpikes and less general than their measure-based counterparts.\footnote{To see this, observe that any exponential bound $C(e^{-\gamma t}+e^{-\gamma (T-t)})$ can for all $t \in [0,T]$ be bounded from above by a hyperbolic function $\frac{\tilde C}{T}$, which is independent of $t$. Moreover, our proposed definition of hyperbolic turnpikes directly implies the measure-based variant. A detailed investigation of these relation is subject to future work.} We show for a specific example that for all finite horizons the optimal solutions can be characterized by a specific sequence of trim solutions and that the time-varying turnpike orbit corresponds to an optimal trim solution. 
To the best of the authors' knowledge the present paper appears to be the first one to explicitly combine the concepts of trim primitives and time-varying turnpike properties. 
 
The remainder of the paper is structured as follows. Section~\ref{sec:velo_steady} introduces the concept of velocity steady states, provides background on symmetries in mechanical systems and introduces the problem at hand. Section~\ref{sec:Exmpl1D} draws upon a motivational example to illustrate the concept of a velocity turnpike, while Section~\ref{sec:nonlinMech} presents numerical results for a nonlinear mechanical example. Finally this paper ends with conclusions and outlook in Section~\ref{sec:conclusions}. 

\textbf{Notation}: $\\mathcal{L}^\infty([0,T],\mathbb{R}^m)$, $m \in \mathbb{N}$ and $T \in \mathbb{R}_{>0}$, denotes the space of Lebesgue-measurable and absolutely integrable functions $f:[0,T] \rightarrow \mathbb{R}^n$. $\| \cdot \|$ denotes the Euclidean norm. 

\section{Velocity Turnpikes and Trim Primitives} \label{sec:velo_steady}

In this section we define a velocity steady state, which forms the basis for the concept of a velocity turnpike. Both concepts turn out to be the suitable generalization of the terms steady state and turnpike in optimal control of mechanical systems. Before doing so, we briefly recap mechanical systems with a particular focus to symmetries.

\subsection{Mechanics and Symmetry}\label{sec:MechSymmetry}


Let $Q$ denote the $n$-dimensional smooth manifold of configurations and the system dynamics be given by Euler-Lagrange equations
\begin{align}\label{NotationSystemDynamics}
		\frac{\mathrm{d}}{\mathrm{d}t} \frac{\partial L}{\partial \dot{\mathbf{q}}} - \frac{\partial L}{\partial \mathbf{q}} & = f_L(\mathbf{q},\dot{\mathbf{{q}}},\mathbf{u})
\end{align}
with real-valued Lagrangian $L$ and mechanical forces $f_L$ depending on external controls $\mathbf{u} \in \mathbb{R}^m$ where $\mathbf{q}\in Q$ and the state space is given by the tangent bundle $TQ$. Assuming regularity of the Lagrangian, the second-order Euler-Lagrange equations can be reformulated as a system of first-order Ordinary Differential Equations (ODEs) in the form $\dot{\mathbf{x}} = f(\mathbf{x},\mathbf{u})$ where $\mathbf{x} = (\mathbf{q},\dot{\mathbf{q}}) = (\mathbf{q},{\mathbf{v}}) \in T_q Q$ denotes the full state, which is contained in the tangent space at~$q$. %
Then, the solution~$\mathbf{x}(t) = \phi_u(t;\mathbf{x}_0)$ to the Euler-Lagrange Eq.~\eqref{NotationSystemDynamics} for initial condition $\mathbf{x}_0$ is given by the (forced Lagrangian flow) $\phi_u: [0,T] \times TQ \to TQ$ for $u \in \mathcal{L}^1([0,T],\mathbb{R}^m)$.

A Lie group is a group $(\mathcal{G},\circ)$, which is also a smooth manifold, for which the group operations $(g,h) \mapsto g \circ h$ and $g \mapsto g^{-1}$ are smooth. If, in addition, a smooth manifold~$M$ is given, we call a map $\Psi: \mathcal{G} \times M \rightarrow M$ a left-action of $\mathcal{G}$ on $M$ if and only if the following properties hold:
\begin{itemize}
	\item $\Psi(e,\mathbf{x}) = \mathbf{x}$ for all $\mathbf{x} \in M$ where $e$ denotes the neutral element of $(\mathcal{G},\circ)$,
	\item $\Psi(g,\Psi(h,\mathbf{x})) = \Psi(g \circ h,\mathbf{x})$ for all $g, h \in \mathcal{G}$ and $\mathbf{x} \in M$.
\end{itemize}

\begin{defn}[Symmetry Group]\label{def:symmetryGroup}
	Let the configuration manifold $Q$ be a smooth manifold, $(\mathcal{G},\circ)$ a Lie-group, and $\Psi$ a left-action of $\mathcal{G}$ on $Q$. Further, let $\Psi^{TQ}: \mathcal{G} \times TQ \to TQ$ be the lift of $\Psi$ to $TQ$. Then, we call the triple $(\mathcal{G},Q,\Psi^{TQ})$ a \emph{symmetry group} of the system \eqref{NotationSystemDynamics} if the property
\begin{align}\label{NotationCommutativityFlowSymmetry}
	\phi_u(t;\Psi^{TQ}(g,\mathbf{x}_0)) = \Psi^{TQ}(g,\phi_u(t;\mathbf{x}_0)) \quad\forall\, t \in [0,T]
\end{align}
holds for all $(g,\mathbf{x}_0,u) \in \mathcal{G} \times TQ \times \mathcal{L}^1([0,T],\mathbb{R}^m)$.
\eBox
\end{defn}
\begin{defn}[Trim Primitive] \label{def:Trims}
Let $(\mathcal{G},Q,\Psi^{TQ})$ be a symmetry group in the sense of 
	Definition~\ref{def:symmetryGroup}.
	Then, a trajectory $\phi_u(\cdot;\mathbf{x}_0)$, $\mathbf{u}(t) \equiv \bar{\mathbf{u}} = \text{const.}$, is called a \emph{(trim) primitive} if there exists a Lie algebra element $\xi \in\mathfrak{g}$ such that
	\[ 
		\phi_u(t;\mathbf{x}_0) = \Psi^{TQ}(\exp(\xi t),\mathbf{x}_0) \quad\forall\, t \geq 0.\qquad\text{\eBox}
	\]
\end{defn}
For a formal definition of Lie algebras we refer to~\cite{Bake12}. Instead, we illustrate the introduced concepts by means of the following example.
\begin{exmp}
Consider the mechanical system of the particular form
		\begin{equation}\label{NotationSecondOrderSystem}							\begin{aligned}
			\dot{\mathbf{q}}(t) & = \mathbf{v}(t) \\
			\dot{\mathbf{v}}(t) & = f(\mathbf{v}(t),\mathbf{u}(t))
			\end{aligned}
		\end{equation}
				This system class is invariant w.r.t.\ translations in $q$.
		A trim can be characterized by the pair $(\bar{\mathbf{v}},\bar{\mathbf{u}})^\top$ satisfying the condition $f(\bar{\mathbf{v}},\bar{\mathbf{u}}) = 0$. Then, we get the solution trajectories $\mathbf{q}(t) = \mathbf{q}_0 + \bar{\mathbf{v}}t$ and $\mathbf{v}(t) = \mathbf{v}_0 = \bar{\mathbf{v}}$. This can also be expressed via
		\begin{align*}
			\Psi^{TQ} \left(\exp(\xi t), \begin{pmatrix}
				\mathbf{q}_0 \\ \mathbf{v}_0
			\end{pmatrix} \right) & 
			= \begin{pmatrix}
				\mathbf{q}_0 + \boldsymbol{\xi} t \\ \mathbf{v}_0
			\end{pmatrix}. \qquad\text{\eBox}
		\end{align*}
\end{exmp}

\subsection{Velocity Steady States and Velocity Turnpikes}

Let the stage cost $\ell: \mathbb{R}^{n} \times \mathbb{R}^{m} \to \mathbb{R}$ be  continuous and convex and let the closed sets $\mathbb{U} \subseteq \mathbb{R}^m$ and $\mathbb{X} \subseteq \mathbb{R}^n$ be given. %
We consider the OCP
\begin{align}
	\underset{u \in \mathcal{L}^1([0,T],\mathbb{R}^{m})}{\text{minimize}} \quad & \int_{0}^{T} \ell(\mathbf{x}(t),\bu(t))\,\mathrm{d}t \nonumber \\
	\text{subject to}& \label{eq:OCP}\\
		 \dot{\mathbf{x}}(t) &= f(\mathbf{x}(t),\bu(t)) \quad \forall\, t \in [0,T]\nonumber\\
		 		 \mathbf{x}(0) &= \mathbf{x}_0 \text{ and } \mathbf{x}(T) = \mathbf{x}_{T} \nonumber\\
		 \bu(t) &\in \mathbb{U} \text{ and } \mathbf{x}(t) \in \mathbb{X} \quad \forall\, t \in [0,T] \nonumber
\end{align}
%
where the last three conditions refer to the system dynamics, the boundary conditions, and the control and state constraints.

A state~$\bar{\mathbf{x}} \in \mathbb{X}$, is called (controlled) equilibrium if there exists $\bar\bu \in \mathbb{R}^m$ such that $f(\bar{\mathbf{x}},\bar\bu) = 0$ holds. %
Based on this terminology, 	the pair $(\bar{\mathbf{x}},\bar\bu) \in \mathbb{R}^{n} \times \mathbb{R}^{m}$ is called an \emph{optimal steady state} if it holds that
\begin{align}\label{eq:SOP}
	(\bar{\mathbf{x}},\bar\bu) = \operatorname{argmin} \{ \ell(\mathbf{x},\bu) | (\mathbf{x},\bu) \in \mathbb{X}\times\mathbb{U}, f(\mathbf{x},\bu) = 0 \}\hspace*{-0.035cm}.
\end{align}

%


Classically, turnpikes are optimal steady states, i.e. solutions to \eqref{eq:SOP}. 
For mechanical systems which are modeled by second-order dynamics (see Section~\ref{sec:MechSymmetry}), a steady state always corresponds to zero velocity. %
However, as we have seen above, symmetries of mechanical systems may lead to trim trajectories, which have constant nonzero velocity and linear behavior in the configuration variables~$\mathbf{q}$. %
As we elaborate in the following, trims play an important role in optimal control of mechanical systems. %
In fact, we show that mechanical systems with symmetries can be optimally controlled on trims and these trims can be seen as time-varying or velocity turnpikes. %
Therefore, we extend the definition of steady states, to \emph{velocity steady states}, %
which have constant velocity~$v$, but dynamical motions in the configurations $q$.
\begin{defn}[Velocity Steady State]~\\
	$(\bar\bv, \bar\bu) \in \mathbb{R}^n \times \mathbb{R}^m$ is called a \emph{velocity steady state} for the mechanical control system
	\begin{equation} \label{eq:sys_mechanics}
		\frac{\mathrm{d}}{\mathrm{d}t} \begin{pmatrix} \bq\\ \bv \end{pmatrix} = \begin{pmatrix} \bv \\ f(\bv,\bu) \end{pmatrix}
	\end{equation}
	if $f(\bar\bv, \bar\bu) = 0$ holds.
	 \eBox
\end{defn}
On a velocity steady state, from $\bv(t) \equiv \bar\bv$ we directly see that $\mathbf{q}(t) = \bq_{0} +\bar\bv t$ holds for $t\geq0$, i.e.\ the position is (at least for $\bar{\mathbf{v}} \neq 0$) constantly changing, while the system remains in the \textit{velocity} steady state. %
\begin{rem}[Partial stability and velocity steady states]\label{RemarkVorotnikov}~\\
	It is worth to be noted that the notion of velocity steady state as defined above is a special case of the concept of \emph{partial steady states}. We refer to \cite{Vorotnikov12} for details on partial stability and partial steady states. Here, however, we prefer to focus on velocity steady states due to their close relation to trim primitives. 
	\eBox
\end{rem}
The definition of a \textit{velocity steady state} exploits the translational invariance of System~\eqref{eq:sys_mechanics}. Indeed, the more general definition would be a \textit{symmetry steady state} --~which would then also generalize the concept of a partial steady state as mentioned in Remark~\ref{RemarkVorotnikov}. In other words, a classical steady state refers to an affine subspace of dimension zero, a velocity steady state corresponds to an $n$-dimensional affine subspace, while the symmetry steady state reduces the remaining freedom of the underlying mechanical system~\eqref{NotationSystemDynamics} to \textit{motions} on a symmetry-induced manifold. Here, homogeneous coordinates are required to match the respective manifold to an affine subspace, see the explanations on representation of mechanical systems by~\cite{FlasOber19}.

Subsequently, we consider OCP~\eqref{eq:OCP} for systems~\eqref{eq:sys_mechanics}, i.e.\ the boundary constraints in~\eqref{eq:OCP} are given by $\mathbf{x}_0 = (\bq_0, \bv_0)$ and $\mathbf{x}_T = (\bq_T, \bv_T)$. Moreover, we restrict the initial conditions to a compact set $\mathbb{X}_0\subset\mathbb{X}$.\footnote{If one considers $\mathbb{X}_0 = \mathbb{X}$, then the subsequent turnpike definitions imply that $\mathbb{X}$ has to be controlled forward invariant, which might be overly restrictive. Hence, we restrict the set of initial conditions to an appropriate subset $\mathbb{X}_0\subset\mathbb{X}$.}

Next we propose a definition of a time-varying turnpike property, where the turnpike as such is a velocity steady state. Similarly to \cite{Carlson91} consider 
\begin{equation} \label{eq:Theta}
	\Theta_{T}(\varepsilon) = \left\{t \in [0,T]: \left\| (\bv^\star(t), \bu^\star(t)) - (\bar\bv, \bar\bu)\right\| > \varepsilon\right\},
\end{equation}
which is the set of time instances for which the optimal velocity and input trajectory pairs are not inside an $\varepsilon$-ball of the steady-state pair $(\bar\bv, \bar\bu)$. Now we are ready to define a measure-based velocity turnpike property similar to \cite{epfl:faulwasser15h}. 
\begin{defn}[Velocity turnpike property] \label{def:turnpike}~\\
	The optimal solutions $(q^\star,v^\star,u^\star)$ of OCP \eqref{eq:OCP} are said to have a \textit{velocity turnpike} w.r.t.\ $(\bar\bv, \bar\bu)$ %
	if there exists a function $\nu:\mathbb{R}_{\geq 0} \to \mathbb{R}_{\geq 0}$ such that, for all $\mathbf{x}_0 \in \mathbb{X}_0$ and all $T>0$, we have
	\begin{equation}\label{eq:TP}
		\mu\left[\Theta_{T}(\varepsilon)\right]  <\nu(\varepsilon)< \infty \quad \forall\: \varepsilon >0,
	\end{equation}
	where $\mu$ is the Lebesgue measure.
%
	\eBox
\end{defn}
As already mentioned, there exist alternative definitions of turnpike properties, %
see \cite{Stieler14a, Trelat15a} for so-called \textit{exponential} turnpikes and \cite{Gugat18a} for \emph{averaged (input)} turnpikes. %
For the purpose of this paper, we are interested in a slightly more general property, where the \textit{exponential bound} from  \cite{Stieler14a, Trelat15a}  is replaced by a \textit{hyperbolic function}. Note that every hyperbolic velocity turnpike is also a velocity turnpike.
\begin{defn}[Hyperbolic velocity turnpike property] \label{def:hyperbolic_velocity_turnpike}~\\
	The optimal solutions $(q^\star, v^\star, u^\star)$  of OCP \eqref{eq:OCP} are said to have a \textit{hyperbolic velocity turnpike} w.r.t.\, $(\bar\bv, \bar\bu)$ %
	if there exist positive constants $C, \tau_0(\bv_0), \tau_T(\bv_T)$  such that, for all $\mathbf{x}_0 \in \mathbb{X}_0$ and all sufficiently large $T > \tau_0(\bv_0) + \tau_T(\bv_T)\geq 0$, we have
	\begin{equation}\label{eq:hyp_vTP}
		\left\| (\bv^\star(t), \bu^\star(t)) - (\bar\bv, \bar\bu)\right\| \leq\frac{C}{T}
	\end{equation}
	for all $t\in [\tau_0(\bv_0),T-\tau_T(\bv_T)]$.\eBox
\end{defn}

Note the restriction of the optimization interval $[0,T]$ to $[\tau_0(\bv_0),\,T-\tau_T(\bv_T)]$ in the above definition, which is needed to allow for cases where the boundary velocities $\bv_0$ and $\bv_T$ are not close to $\bar\bv$. 
Interestingly, hyperbolic velocity turnpikes are closely related to \emph{averaged} (\emph{velocity}) turnpikes, who can be defined using ideas on  averaged (input) turnpike properties in PDE-constrained OCPs, see \cite{Gugat18a}. A detailed investigation of this connection is beyond the scope of this paper.

\section{Illustrative Example}\label{sec:Exmpl1D}
We consider the second-order system $\ddot{x}(t) = u(t)$. %
Firstly, we rewrite the system dynamics as a first-order ODE, i.e.
\begin{equation}\label{MotivationalExampleFirstOrderODE}
	\frac {\mathrm{d}}{\mathrm{d}t} \begin{pmatrix}
		q(t) \\ v(t)
	\end{pmatrix} = \begin{pmatrix}
		0 & 1 \\ 0 & 0
	\end{pmatrix} \begin{pmatrix}
		q(t) \\ v(t)
	\end{pmatrix} + \begin{pmatrix}
		0 \\ 1
	\end{pmatrix} u(t).
\end{equation}
The stage cost is given by $\ell(q,v,u) := \frac 12 (\Vert v \Vert^2 $ $+ \Vert u \Vert^2)$. If we impose, in addition, the boundary conditions
\begin{equation}\label{MotivationalExampleBoundaryConditions}
	\begin{pmatrix}
		q(0) \\ v(0)
	\end{pmatrix} = \begin{pmatrix}
		q_0 \\ v_0
	\end{pmatrix} \quad\text{ and }\quad \begin{pmatrix}
		q(T) \\ v(T)
	\end{pmatrix} = \begin{pmatrix}
		q_T \\ v_T
	\end{pmatrix},
\end{equation}
we get the OCP
\begin{align} 
	\underset{u \in \mathcal{L}^1([0,T],\mathbb{R})}{\text{minimize}}\quad & \int_{0}^{T} \frac 12 \left( \Vert v(t) \Vert^{2} + \Vert u(t) \Vert^{2} \right) \mathrm{d}t \label{eq:example_OCP} \\
			\text{subject to}\quad & \eqref{MotivationalExampleFirstOrderODE} \text{ for almost all $t \in [0,T]$ and \eqref{MotivationalExampleBoundaryConditions}}. \nonumber
	\end{align}

OCP~\eqref{eq:example_OCP} considers a controllable linear time-invariant system without input constraints and a stage cost $\ell$ strictly convex in $u$. Hence, for $T > 0$, classical results  can be used to show unique existence of an optimal solution.\footnote{More precisely, one can employ \cite[Thm. 8, p. 208]{Lee67} to show that the augmented reachable set corresponding to \eqref{eq:example_OCP} is a closed and convex subset of $\mathbb{R}^{n+1}$. Existence and uniqueness of $u^\star$ follows via geometric arguments \cite[p. 217]{Lee67}.}

Since the system is invariant w.r.t.\ translations in $q$, %
any triple $(q,v,u)$ with $(q,0,0)$ is a velocity steady state and all of them satisfy $\ell(q,v,u) = 0$, %
so the system is optimally operated at all of these steady states.

The symmetry group of the system~\eqref{MotivationalExampleFirstOrderODE} is $\mathcal{G} = \mathbb{R}$ with $\Psi_{g}(q) = q+g$. %
For the full state vector $(q,v)$, the symmetry action can be lifted to $\Psi^{TQ}_{g}(q,v) = (q+g,v)$. %
The Lie algebra is $\mathfrak{g} = \mathbb{R}$ and the exponential map is the identity.
Thus, for $\xi \in \mathfrak{g}$ and $v_0 = \xi$, trims are given by 
\[ 
	\begin{pmatrix}
		q(t) \\ v(t)
	\end{pmatrix} = \begin{pmatrix}
		q(0) \\ v(0)
	\end{pmatrix} + t \begin{pmatrix}
		\xi \\ 0
	\end{pmatrix} \qquad\text{for $u(t) \equiv 0$}. 
\]

\subsection{Pontryagin's Maximum Principle} 

Next, we apply Pontryagin's Maximum Principle, i.e.\ the necessary optimality conditions, to further analyze the example. To this end, we first define the Hamiltonian 
\begin{multline*}
\mathcal{H}(q,v,\lambda,u) = \\
	\lambda_{0} \left( \frac{1}{2} \Big( v^{2} + u^{2} \Big) \right) + 
	\underbrace{\lambda^\top \left( \begin{pmatrix}
		0 & 1 \\ 0 & 0
	\end{pmatrix} \begin{pmatrix}
		q \\ v
	\end{pmatrix} + \begin{pmatrix}
		0 \\ 1
	\end{pmatrix} u \right)}_{= \lambda_1 v + \lambda_2 u}.
\end{multline*}
Before we proceed, let us briefly show that OCP \eqref{eq:example_OCP} is normal, i.e. the multiplier $\lambda_0$ is not equal to zero (which allows dropping~$\lambda_0$ as an argument of the Hamiltonian~$\mathcal{H}$). %
Suppose that $\lambda_0 = 0$ holds. Then, $\mathcal{H}_u = \lambda_2$ holds, %
which implies $\lambda_2(t) \equiv 0$ and, thus, $\dot{\lambda}_2(t) = 0$. %
Plugging this condition into the equation $\dot{\lambda}_2 = - \mathcal{H}_v = - \lambda_1(t)$, %
yields $\lambda_1(t) \equiv 0$, i.e.\, a contradiction to the nontriviality of the multipliers. In conclusion, we can set $\lambda_0 := 1$ w.l.o.g. in the following.

The adjoint equations are
\begin{align*}
	\dot{\lambda}_1(t) & =  - \mathcal{H}_q(q^\star(t),v^\star(t),\lambda(t),u^\star(t)) = 0,  \\
	\dot{\lambda}_2(t) & =  - \mathcal{H}_v(q^\star(t),v^\star(t),\lambda(t),u^\star(t)) = -v^\star(t) - \lambda_1(t). 
\end{align*}
In addition, the maximum principle yields
\begin{equation*}
	\mathcal{H}_u(q^\star(t),v^\star(t),\lambda(t),u^\star(t)) = 0 \quad\Longleftrightarrow\quad u^\star(t) = -\lambda_2(t) 
\end{equation*}
for almost all $t \in [0,T]$, which can be used to eliminate the control from the optimality system (state-adjoint system) %
with Hamiltonian matrix $H_{OCP}$
\begin{equation}\label{eq:TPBVP}
	\frac {\mathrm{d}}{\mathrm{d}t} \begin{pmatrix} 
		q^\star(t) \\ v^\star(t) \\ \lambda_{1}(t) \\ \lambda_{2}(t)
	\end{pmatrix} = \underbrace{\begin{pmatrix} 
		0 & \phantom{-}1 & \phantom{-}0 & \phantom{-}0 \\ 0 & \phantom{-}0 & \phantom{-}0 & -1 \\ 0 & \phantom{-}0 & \phantom{-}0 & \phantom{-}0 \\ 0 & -1 & -1 & \phantom{-}0 
	\end{pmatrix}}_{=:H_{OCP}} \begin{pmatrix} 
		q^\star(t) \\ v^\star(t) \\ \lambda_{1}(t) \\ \lambda_{2}(t) 
	\end{pmatrix}
	\text{ s.t. \eqref{MotivationalExampleBoundaryConditions}.}
\end{equation}

Next, we solve the two-point boundary problem \eqref{eq:TPBVP}. To this end, we first compute
\begin{equation}\nonumber
	\det(\sigma- H_{OCP}) = \left| \begin{array}{rrrr}
		\sigma & -1 & 0 & 0 \\
		0 & \sigma & 0 & 1 \\
		0 & 0 & \sigma & 0 \\
		0 & 1 & 1 & \sigma
	\end{array} \right| = \sigma^2 ( \sigma^2 - 1 ) \stackrel{!}{=} 0,
\end{equation}
i.e.\ we get the eigenvalue $\sigma_1 := 0$ with algebraic multiplicity two and geometric multiplicity one. %
Hence, we get the eigenvector $w_{\sigma_1} = (1\ 0\ 0\ 0)^\top$ and the generalized eigenvector $h_{\sigma_1} = (0\ 1\ -1\ 0)^\top$. %
Moreover, we obtain the eigenvalues $\sigma_2 := 1$ and $\sigma_3 := -1$ %
with eigenvectors $w_{\sigma_2} = (1\ 1\ 0\ -1)^\top$ and $w_{\sigma_3} = (1\ -1\ 0\ -1)^\top$. 

Using these preliminary considerations allows us to compute the Jordan canonical form
\begin{equation}\nonumber
	\left( \begin{array}{rr|rr}
		0 & 1 & & \\
		0 & 0 & & \\ \hline
		& & 1 & \\
		& & & -1
	\end{array} \right) = \underbrace{T^{-1} H_{OCP} T}_{=:J} \quad\text{with}~T = \left( \begin{smallmatrix}
		1 & 0 & 1 & 1 \\
		0 & 1 & 1 & -1 \\
		0 & -1 & 0 & 0 \\
		0 & 0 & -1 & -1
	\end{smallmatrix} \right).
\end{equation}
Next, we compute $e^{H_{OCP}t} = T e^{Jt} T^{-1}$, which yields
\begin{eqnarray}\nonumber
	e^{At} & = & \left( \begin{array}{rrrr}
		1 & \sinh(t) & \sinh(t)-t & 1 - \cosh(t) \\
		0 & \cosh(t) & \cosh(t)-1 & -\sinh(t) \\
		0 & 0 & 1 & 0 \\
		0 & -\sinh(t) & -\sinh(t) & \cosh(t)
	\end{array} \right)
\end{eqnarray}
where we used the functions $\cosh(t) = \nicefrac{1}{2} (e^t+e^{-t})$ and $\sinh(t) = \nicefrac{1}{2} (e^t-e^{-t})$ to simplify the resulting expression. %
Thus, the solution of the state-adjoint system is given by
\begin{equation}\label{ExampleMotivationalSolution}
	\begin{pmatrix}
		q^\star(t) & v^\star(t) & \lambda_1(t) & \lambda_2(t)
	\end{pmatrix}^\top
	= e^{At} \begin{pmatrix}
		q_0 & v_0 & \lambda_1(0) & \lambda_2(0)
	\end{pmatrix}^\top.
\end{equation}
The third equation immediately implies $\lambda_1(t) = \lambda_1(0)$ for all $t \in [0,T]$ (and, in particular, $\lambda_1(T) = \lambda_1(0)$). 

In the following, we consider this system of equations for $t = T$ to make use of the boundary conditions in order to determine the unknowns $\lambda_1(0)$, $\lambda_2(0)$, and $\lambda_2(T)$. Adding the first to the fourth equation stated in~\eqref{ExampleMotivationalSolution} yields
\begin{equation}\label{ExampleMotivationalEq1}
	\lambda_2(T) = q_0 - q_T - T \cdot \lambda_1(0) + \lambda_2(0).
\end{equation}
Next, rearranging the second equation of~\eqref{ExampleMotivationalSolution} yields
\[
	\lambda_2(0) = \frac {\cosh(T) v_0 - v_T + (\cosh(T)-1) \lambda_1(0)}{\sinh(T)}.
\]
Plugging this expression for $\lambda_2(0)$ into the first equation leads to the equation
\begin{equation}\nonumber
	\lambda_1(0) = \frac{\sinh(T)(q_T-q_0)+(1-\cosh(T)) (v_0+v_T)}{2 (\cosh(T)-1) - T \sinh(T)} 
\end{equation}
Hence, using this expression yields $\lambda_2(0)$ and, consequently, allows for evaluating the velocity~$v^\star(t)$, $t \in (0,T]$.

\subsection{Hyperbolic Velocity Turnpike}

\begin{figure}
	\begin{center}
		\includegraphics[width=.75\textwidth]{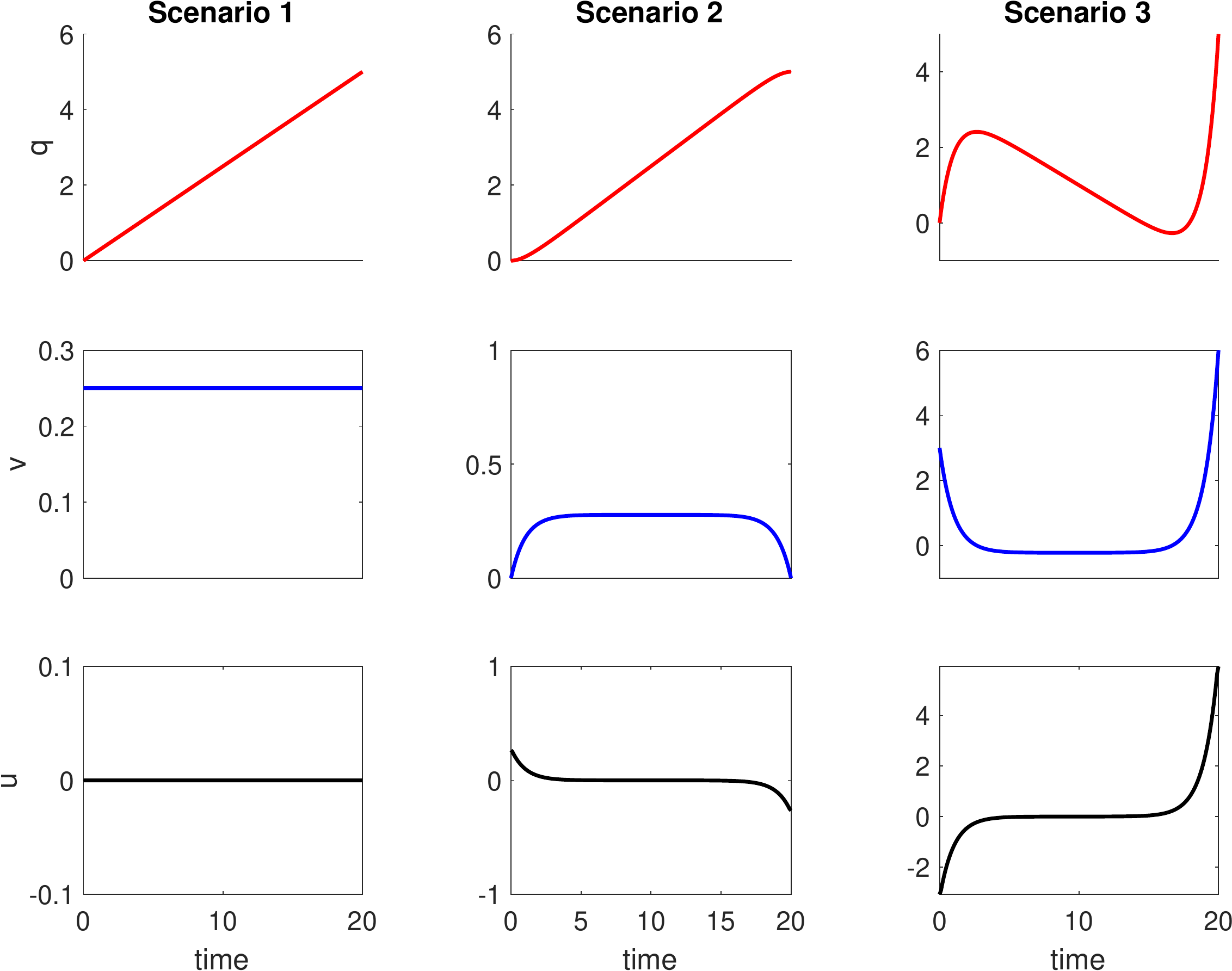}
		\caption{Numerical solution of the illustrative example  for $T = 20$.}
		\label{fig:motivational Example}
	\end{center}
\end{figure}

Here, we focus on the special case $v_0 = v_T = 0$. Moreover, we use the abbreviation $\tilde{q} := q_T - q_0$ since only the distance to traverse matters due to the translational invariance of the system in consideration. Then, we numerically demonstrate that also other boundary conditions essentially lead to the same result. 
\begin{prop}[Hyperbolic velocity turnpike]~\\
	For $v_0 = v_T = 0$ and all $q_0\in\mathbb{R}$ the optimal solutions $(q^\star(\cdot), v^\star(\cdot), u^\star(\cdot))$ exhibit an \textit{hyperbolic velocity turnpike} with respect to $(\bar v, \bar u) = (0,0)$, %
	i.e.\ there exists a positive constant $C$ such that, for all $q_0, q_T \in \mathbb{R}$ and all $T > 0$, 
	we have
	\begin{equation}\label{eq:vTP}
		\left\| (\bv^\star(t), \bu^\star(t)) - (\bar\bv, \bar\bu)\right\| \leq C / T
	\end{equation}
	for all $t \in [0,T]$. \eBox
\end{prop}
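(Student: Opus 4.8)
The plan is to work directly from the closed-form expression for the solution of the state-adjoint system obtained above, specializing to $v_0 = v_T = 0$. With these boundary velocities, the formula for $\lambda_1(0)$ simplifies to $\lambda_1(0) = \dfrac{\sinh(T)\,\tilde q}{2(\cosh(T)-1) - T\sinh(T)}$, and correspondingly $\lambda_2(0) = \dfrac{(\cosh(T)-1)\,\lambda_1(0)}{\sinh(T)}$. The first task is therefore to substitute these into the $v^\star$-component of \eqref{ExampleMotivationalSolution}, namely $v^\star(t) = \cosh(t)v_0 + (\cosh(t)-1)\lambda_1(0) - \sinh(t)\lambda_2(0)$, which with $v_0 = 0$ reduces to a function of $t$, $T$ and $\tilde q$ only. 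Likewise $u^\star(t) = -\lambda_2(t) = -\lambda_2(0) + \text{(terms from }e^{At}\text{)}$ is obtained from the fourth component. The goal is to read off that both quantities are of the form $\tilde q$ times a bounded-in-$t$ factor divided by the denominator $D(T) := T\sinh(T) - 2(\cosh(T)-1)$.

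The second step is the elementary analysis of $D(T)$: show $D(T) > 0$ for all $T > 0$ (e.g.\ from the Taylor series $T\sinh T - 2\cosh T + 2 = \sum_{k\ge 1}\bigl(\tfrac{2k}{(2k+1)!} - \tfrac{2}{(2k)!}\bigr)T^{2k+2} + \dots$, or more cleanly by differentiating) and, crucially, that $D(T) \ge c\, T\, e^{T}$ for large $T$ and $D(T) \ge c\, T^4$ near $0$, so that in all regimes $D(T) \gtrsim T \cdot \max\{1, e^{T}\}$ up to constants, while the numerators of $v^\star(t)$ and $u^\star(t)$ grow at most like $\tilde q\cdot e^{T}$ (since $\cosh(t), \sinh(t) \le e^{T}$ for $t \in [0,T]$). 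Dividing, one gets $|v^\star(t)|, |u^\star(t)| \le \text{const}\cdot |\tilde q| / T$ uniformly in $t\in[0,T]$. Since $\bar v = \bar u = 0$, the norm $\|(v^\star(t),u^\star(t)) - (\bar v,\bar u)\|$ is then bounded by $C/T$, which is exactly \eqref{eq:vTP}. The translational invariance in $q$ is what lets us replace $(q_0,q_T)$ by $\tilde q$ and it explains why $q_0$ may be arbitrary; strictly, to claim a single constant $C$ independent of $\tilde q$ one restricts $\tilde q$ to a compact set, consistent with the standing assumption $\mathbf{x}_0 \in \mathbb{X}_0$ with $\mathbb{X}_0$ compact — I would remark on this explicitly.

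The main obstacle I anticipate is the behavior of the bound as $T \to 0^{+}$: there $C/T \to \infty$, so the inequality is vacuous for small $T$, but one must still verify that no finite-$T$ singularity is hidden in the formulas — i.e.\ that $D(T)$ never vanishes on $(0,\infty)$ — since otherwise $v^\star, u^\star$ would blow up at an interior horizon and the claimed uniform-in-$T$ statement would be false. This positivity of $D$ is the one genuinely non-cosmetic estimate; everything else is bookkeeping with $\cosh$ and $\sinh$. A secondary, softer point is to handle the degenerate phrasing of the $t$-interval: Definition~\ref{def:hyperbolic_velocity_turnpike} allows $\tau_0, \tau_T > 0$, but here, because $v_0 = v_T = 0 = \bar v$, we can take $\tau_0(\bv_0) = \tau_T(\bv_T) = 0$ and get the bound on all of $[0,T]$, which is what the proposition asserts; I would note this matches the definition with the trivial choice of boundary-layer lengths. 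Finally, I would close by observing that the computed optimal arc, away from the endpoints, is essentially the trim $u\equiv 0$, $v\equiv \bar v = 0$ — tying the result back to the paper's trim-primitive narrative.
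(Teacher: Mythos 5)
Your overall strategy is the same as the paper's: specialize the closed-form state--adjoint solution to $v_0=v_T=0$, write $v^\star$ and $u^\star$ as $\tilde q$ times a ratio of hyperbolic expressions with denominator $D(T)=T\sinh(T)-2(\cosh(T)-1)$, and bound that ratio by $C/T$. You also correctly isolate the one structural point (positivity of $D$ on $(0,\infty)$) and correctly flag that a single $C$ requires $\tilde q$ in a compact set and that $\tau_0=\tau_T=0$ is admissible here. However, your quantitative step has a genuine gap in the small-$T$ regime. You claim $D(T)\gtrsim T\cdot\max\{1,e^T\}$ ``in all regimes,'' but near $T=0$ one has $D(T)=\tfrac{T^4}{12}+O(T^6)$, which is far smaller than $T$; combined with your crude numerator bound $|\sinh(T)-\sinh(t)-\sinh(T-t)|\le e^T=O(1)$, the division yields only $O(1/T^4)$, not $O(1/T)$. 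Since the proposition asserts the bound for \emph{all} $T>0$, this regime cannot be discarded. The fix is to observe that the numerator also degenerates: uniformly in $t\in[0,T]$ one has $\sinh(T)-\sinh(t)-\sinh(T-t)=\tfrac{1}{2}tT(T-t)+O(T^5)=O(T^3)$, which restores the $C/T$ rate as $T\to 0^+$; a two-regime argument (small $T$ via Taylor, large $T$ via exponentials) would then close the proof.

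The paper avoids this case split entirely by a sharper route: it first locates the extremum of $|v^\star(\cdot)|$ in $t$ exactly (concavity of $v^\star$ forces the maximum at $t=T/2$; monotonicity of $u^\star$ puts its extrema at the endpoints), and then bounds $T|v^\star(T/2)|/|\tilde q|$ by a term-by-term comparison of the power series of numerator and denominator, $2\bigl(1-\tfrac{1}{2^{2(k-1)}}\bigr)\le 3\bigl(1-\tfrac{1}{k}\bigr)$ for $k\ge 2$, valid for every $T>0$ at once and yielding the sharp constant $C_{\tilde q}=\tfrac{3}{2}|\tilde q|$. The same series identity $D(T)=\sum_{k\ge 2}\tfrac{T^{2k}}{(2k-1)!}\bigl(1-\tfrac1k\bigr)>0$ simultaneously settles the positivity of the denominator that you single out as the key estimate. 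So your proposal is salvageable, but as written the uniform-in-$T$ claim does not follow from the bounds you state.
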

%
\begin{proof} For $v_0 = v_T = 0$, we get
\begin{align}
	\lambda_1(0) & = \left( \frac {\sinh(T)}{2(\cosh(T)-1) - T\sinh(T)}\right) \, \tilde{q},\nonumber \\
	\lambda_2(0) & = \left(\frac {\cosh(T)-1}{2(\cosh(T)-1) - T\sinh(T)}\right) \, \tilde{q}. \nonumber
\end{align}
and, thus, 
\begin{align}
	v^\star(t) & = (\cosh(t)-1) \lambda_1(0) - \sinh(t) \lambda_2(0) \nonumber \\
	& = \left(\frac {\sinh(t) + \sinh(T-t) - \sinh(T)}{2(\cosh(T)-1) - T\sinh(T)}\right) \, \tilde{q}. \nonumber
\end{align}
A direct calculation yields the first two derivatives of $v^\star(t)$:
\begin{align}
	v^{\star \prime}(t) = \left( \frac {\cosh(t) - \cosh(T-t)}{2(\cosh(T)-1) - T\sinh(T)} \right) \, \tilde{q}, \nonumber \\
	v^{\star \prime\prime}(t) = - \left( \frac {\sinh(T-t) + \sinh(t)}{2(\cosh(T)-1) - T\sinh(T)} \right) \, \tilde{q}. \nonumber
\end{align}
For $T>0$, the denominator is strictly negative. %
Hence, for $q_T > q_0$, the second derivative is negative definite and, thus, concave. %
Since the first derivative has its only zero at $t = T/2$, which is located in the interior of the domain $[0,T]$, and $v(0) = v(T) = 0$, we get 
\begin{align}\nonumber
	| v^\star(T/2) | = \max_{t \in [0,T]} | v(t) |
\end{align}
where the absolute value is only added to obtain the same assertion for $q_0 > q_T$ (by an analogous argumentation). %
Next, we show $| v^\star(T/2) | \leq C_{\tilde{q}} / T$ $\forall\, T > 0$ with $C_{\tilde{q}} = 3 |\tilde{q}| /2$:
\begin{eqnarray*}\nonumber
	\frac {T | v^\star(T/2) |}{|\tilde{q}|} & = & T \left( \frac{2 \sinh(T/2) (\cosh(T/2) - 1)}{T\sinh(T) - 2(\cosh(T)-1)} \right) \\
	& = & \frac{T (\sinh(T) - 2 \sinh(T/2))}{(T-2)\sinh(T) + 2 (1-e^{-T})} \\
	& = & \frac{ \sum_{k=2}^\infty \frac {T^{2k}}{(2k-1)!} \left( 1 - \frac 1 {2^{2(k-1)}} \right) }{ \sum_{k=2}^\infty \frac {T^{2k}}{(2k-1)!} \left( 1 - \frac 1k \right) } \leq \frac 32
\end{eqnarray*}
since $2 (1 - \frac 1 {2^{2(k-1)}}) \leq 3 (1-\frac 1k)$ holds for $k = 2$ with equality and for $k > 2$ with strict inequality. %
Clearly, if the set of initial positions is compact, $C_{\tilde{q}}$ could be uniformly estimated.
An (almost) analogous reasoning applies for the optimal control~$u^\star$ since $u^\star(t) = -\lambda_2(t)$: the nominator of $v^\star(t)$ is replaced by $\sinh(t)\sinh(T)-\cosh(t)(\cosh(T)-1)$. Hence, calculating $\frac {\mathrm{d}}{\mathrm{d}t} u^\star(t)$ directly shows that $u^\star$ is either strictly monotonically increasing ($\tilde{q} > 0$) or decreasing ($\tilde{q} < 0$). Consequently, the extrema are located at the boundaries. Here, we get 
\begin{equation}
	| u^\star(0) | = | u^\star(T) | = \frac {\cosh(T)-1}{T \sin(T) - 2(\cosh(T)-1)} |\tilde{q}|, \nonumber
\end{equation}
which can then be analogously estimated. Overall, $C$ is set to $\sqrt{C_{\tilde{q}}^2 + C_u^2}$.
\end{proof}

Note that the obtained result nicely fits to our intuition. If we double the available time~$T$, we may reduce the speed by $50\%$. %
The numerical approximations of the optimal solutions are obtained using the NLP solver WORHP, see \cite{Knauer16a}, and they are shown in Figure~\ref{fig:motivational Example}.
\begin{figure}
	\begin{center}
		\includegraphics[width=.55\textwidth]{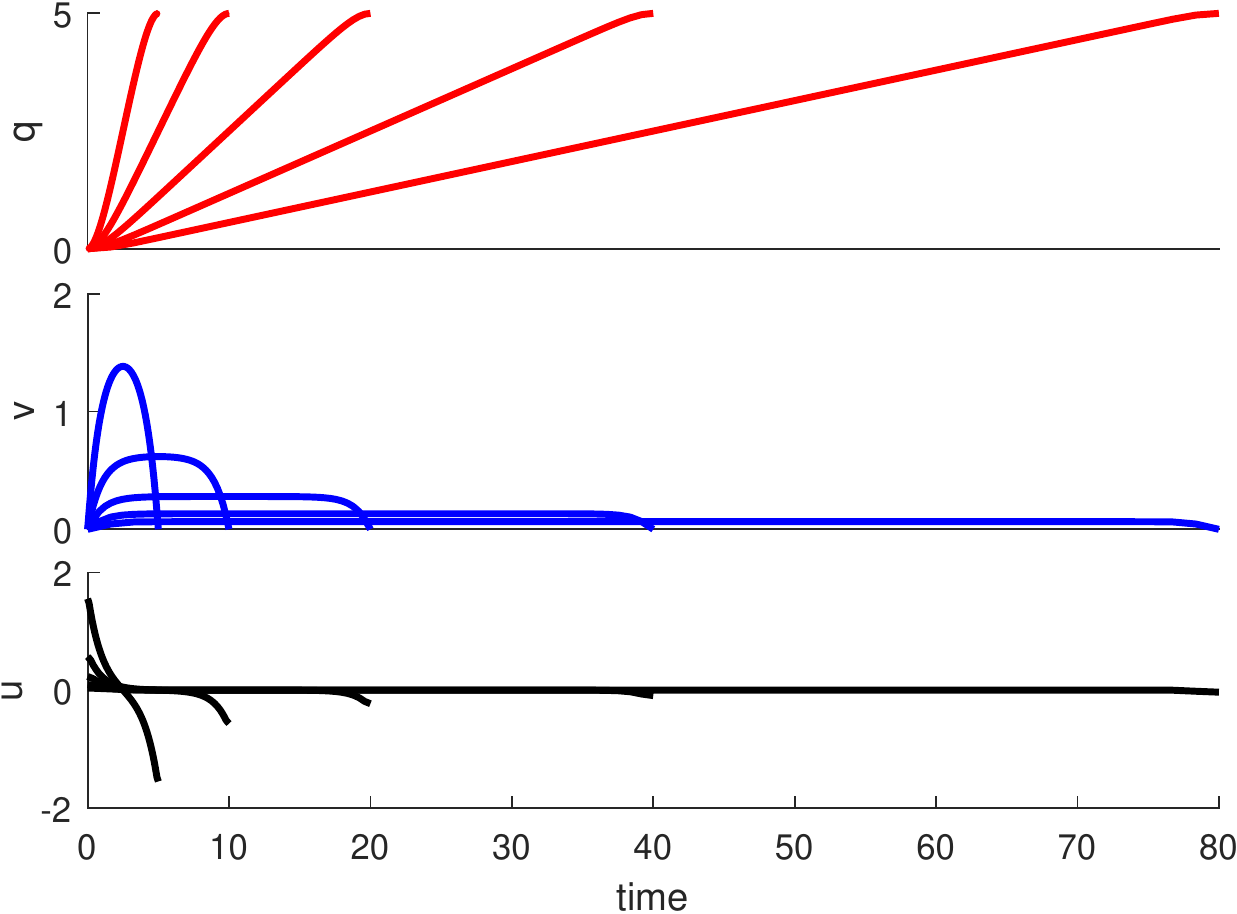}
		\caption{Numerical solution of the illustrative example  for $T \in  \{5,10,20,40,80\}$.}
		\label{fig:motivational ExampleII}
	\end{center}
\end{figure}
We consider $q_0 = 0$ and $q_T=5$ as boundary conditions on the configuration and a fixed final time $T=20$. %
If the boundary velocities are chosen to exactly match the average velocity which is needed for a distance of $\tilde{q} = 5$ in $\Delta t=20$ time steps, %
i.e.\ $v_0 = v_T = \frac{1}{4}$, the velocity turnpike is defined by $\bar v = \bar u =0$, while the optimal solution for $T = 20$ is given by the trim $v^\star = \frac{1}{4}$, cf. (Figure~\ref{fig:motivational Example}, left). %
In Figure~\ref{fig:motivational Example}, center, we give the solution for symmetric boundary values of the velocity, i.e $v_0 = v_T = 0$. 
Here, we observe the incoming arc and leaving arc of the optimal velocity. On the turnpike, $v$ is constant and $q$ increases again linearly. %
As a third scenario, let $v_0 = 3.0$ and $v_T = 6.0$. %
Again, the optimal solution has the predicted turnpike property at $\bar\bv = \bar\bu =0$ with zero control and thus constant velocity and linear decrease of configuration. %
Figure~\ref{fig:motivational ExampleII} shows the solutions for the boundary conditions $q_0 = 0$ and $q_T=5$, $v_0 = v_T = 0$ and 
 $T \in  \{5,10,20,40,80\}$. As expected the velocity turnpike occurs at $\bar\bv = \bar\bu =0$.

\section{Nonlinear Hovercraft Example}\label{sec:nonlinMech}

Now we turn towards a nonlinear example of a hovercraft. The system dynamics are governed by the second-order system
	\begin{align*}
		\begin{pmatrix}
			m \ddot{x} \\
			m \ddot{y} \\
			J \ddot{\theta}
		\end{pmatrix} = \begin{pmatrix}
			R_{\theta} \begin{pmatrix}
				u_1 \\
				u_2
			\end{pmatrix} \\
			-r u_2
		\end{pmatrix}		
	\end{align*}
	Observe that right hand side now depends on the rotation matrix $R_{\theta}$. For simplicity we assume mass and inertia to be equal to one, i.e.~$m=1$ and $J=1$.
	
We have the same behavior as in the previous example: The hovercraft is a second-order system and all accelerations vanish for $u_1=u_2\equiv 0$. 
In OCPs with stage cost $\ell(\mathbf{q}, \mathbf{v}, \mathbf{u})= v_x^2 + v_y^2 +v_\theta^2+ u_1^2+u_2^2$ and boundary conditions on $\mathbf{v}$ 	such that $\mathbf{q}^\star$ can be reached from $\mathbf{q}_0$ with constant $\mathbf{v} \equiv \mathbf{v}_0$, it turns out that indeed the optimal velocity is constant ${v}^\star_i \equiv \frac{q_i(T) -q_i(0)}{T}, i \in \{x,y,\theta\}$. %
Now we consider the parallel parking problem, i.e. $\mathbf{q}_0 = (0,1,0)^\top$ to $\mathbf{q}_T = (0,0,0)^\top$ with $\mathbf{v}_0 = \mathbf{v}_T = 0$. %
The optimal solution indeed seem to have a turnpike, cf. Figure~\ref{fig:hovercraftParking_01}. %
\begin{figure}
	\includegraphics[width=\textwidth]
	{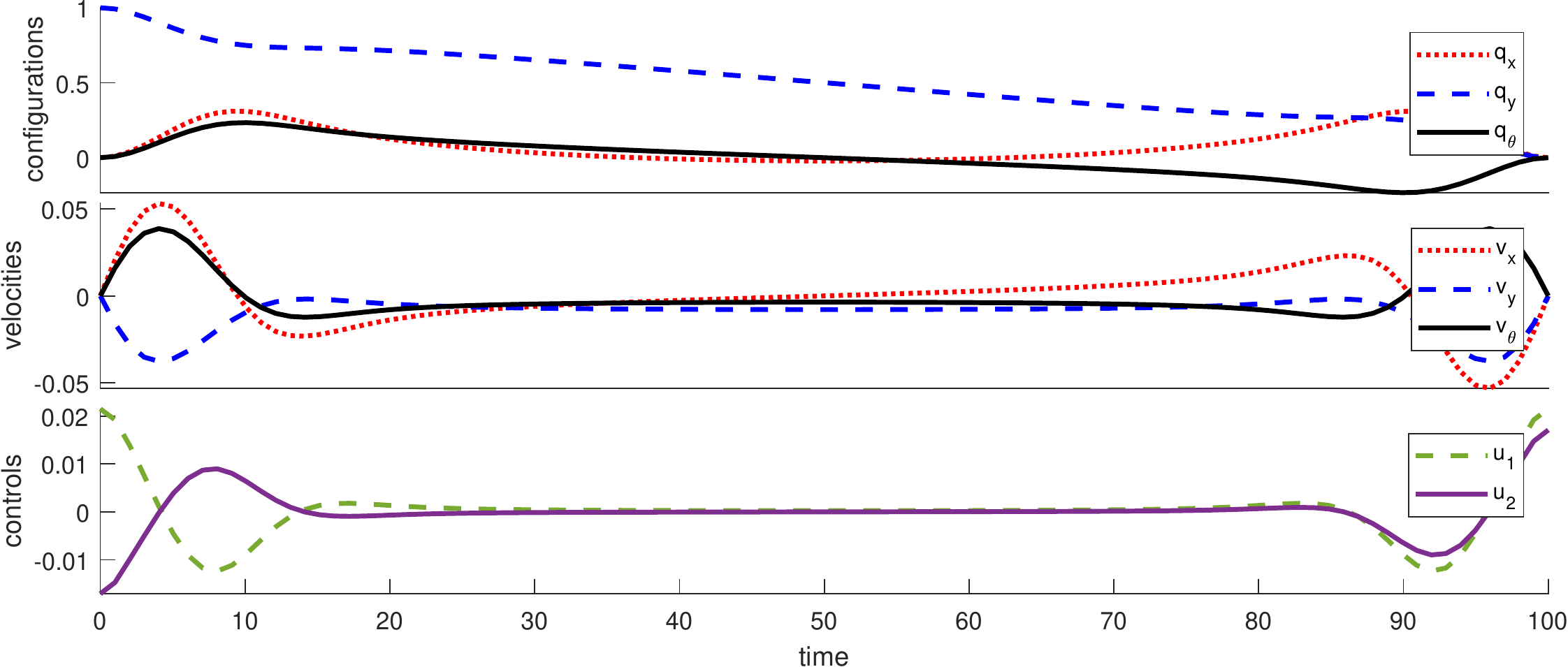}
	\caption{Hovercraft parallel parking example.} 
\label{fig:hovercraftParking_01}
\end{figure}

\section{Conclusions and Outlook} \label{sec:conclusions}
In this paper, we discussed time-varying turnpike properties in mechanical systems with symmetries. We proposed the concept of a velocity turnpike, which is a velocity steady state (or partial steady state). Specifically, we proposed to distinguish measure-based, exponential and hyperbolic velocity turnpikes. We have illustrated these concepts discussing two OCPs. 

Future work will investigated how dissipativity notions can be utilized to further analyze velocity turnpikes.

\bibliographystyle{abbrv}  

\bibliography{ReferencesTurnpikeMechSystem}             

\begin{thebibliography}{10}

\bibitem{Bake12}
A.~Baker.
\newblock {\em Matrix groups: An introduction to Lie group theory}.
\newblock Springer Science \& Business Media, 2012.

\bibitem{Carlson91}
D.~Carlson, A.~Haurie, and A.~Leizarowitz.
\newblock {\em Infinite Horizon Optimal Control: Deterministic and Stochastic
  Systems}.
\newblock Springer, 1991.

\bibitem{Stieler14a}
T.~Damm, L.~Gr{\"u}ne, M.~Stieler, and K.~Worthmann.
\newblock An exponential turnpike theorem for dissipative optimal control
  problems.
\newblock {\em SIAM Journal on Control and Optimization}, 52(3):1935--1957,
  2014.

\bibitem{Dorfman58}
R.~Dorfman, P.~Samuelson, and R.~Solow.
\newblock {\em Linear Programming and Economic Analysis}.
\newblock McGraw-Hill, 1958.

\bibitem{kit:faulwasser18c}
T.~Faulwasser, L.~Gr\"une, and M.~M{\"u}ller.
\newblock Economic nonlinear model predictive control: Stability, optimality
  and performance.
\newblock {\em Foundations and Trends in Systems and Control}, 5(1):1--98,
  2018.

\bibitem{epfl:faulwasser15h}
T.~Faulwasser, M.~Korda, C.~Jones, and D.~Bonvin.
\newblock On turnpike and dissipativity properties of continuous-time optimal
  control problems.
\newblock {\em Automatica}, 81:297--304, April 2017.

\bibitem{FHO15}
K.~Fla{\ss}kamp, S.~{Hage-Packh\"auser}, and S.~{Ober-Bl\"obaum}.
\newblock Symmetry exploiting control of hybrid mechanical systems.
\newblock {\em Journal of Computational Dynamics}, 2(1):25--50, 2015.

\bibitem{FOK12}
K.~Fla{\ss}kamp, S.~{Ober-Bl\"obaum}, and M.~Kobilarov.
\newblock Solving optimal control problems by exploiting inherent dynamical
  systems structures.
\newblock {\em Journal of Nonlinear Science}, 22(4):599--629, 2012.

\bibitem{FlasOber19}
K.~Fla\ss{}kamp, S.~Ober-Bl\"{o}baum, and K.~Worthmann.
\newblock Symmetry and {M}otion {P}rimitives in {M}odel {P}redictive {C}ontrol.
\newblock 2019.
\newblock arXiv: 1906.09134.

\bibitem{Frazzoli2001}
E.~Frazzoli.
\newblock {\em Robust Hybrid Control for Autonomous Vehicle Motion Planning}.
\newblock PhD thesis, Massachusetts Institute of Technology, 2001.

\bibitem{FrazBull02}
E.~Frazzoli and F.~Bullo.
\newblock On quantization and optimal control of dynamical systems with
  symmetries.
\newblock In {\em Proc. 41st IEEE Conf. Decision Control (CDC)}, pages
  817--823, 2002.

\bibitem{FrDaFe05}
E.~Frazzoli, M.~Dahleh, and E.~Feron.
\newblock Maneuver-based motion planning for nonlinear systems with symmetries.
\newblock {\em IEEE Transactions on Robotics}, 21(6):1077--1091, 2005.

\bibitem{Gruene17b}
L.~Gr{\"u}ne and S.~Pirkelmann.
\newblock Closed-loop performance analysis for economic model predictive
  control of time-varying systems.
\newblock In {\em Proc. 56th IEEE Conf. Decision Control (CDC)}, pages
  5563--5569, 2017.

\bibitem{Gugat18a}
M.~Gugat and F.~Hante.
\newblock On the turnpike phenomenon for optimal boundary control problems with
  hyperbolic systems.
\newblock {\em SIAM Journal on Control and Optimization}, 57(1):264--289, 2019.

\bibitem{Gugat16}
M.~Gugat, E.~Tr{\'e}lat, and E.~Zuazua.
\newblock Optimal {N}eumann control for the {1D} wave equation: Finite horizon,
  infinite horizon, boundary tracking terms and the turnpike property.
\newblock {\em Syst. Contr. Lett.}, 90:61--70, 2016.

\bibitem{Knauer16a}
M.~Knauer and C.~B{\"u}skens.
\newblock Understanding concepts of optimization and optimal control with
  {WORHP} {L}ab.
\newblock In {\em Proc. 6th Int. Conf. Astrodynamics Tools Techniques}, 2016.

\bibitem{Lee67}
E.~Lee and L.~Markus.
\newblock {\em Foundations of Optimal Control Theory}.
\newblock The SIAM Series in Applied Mathematics. John Wiley \& Sons, 1967.

\bibitem{Mckenzie76}
L.~McKenzie.
\newblock Turnpike theory.
\newblock {\em Econometrica: Journal of the Econometric Society},
  44(5):841--865, 1976.

\bibitem{vonNeumann38}
J.~\noopsort{Neumann}{von Neumann}.
\newblock {\"U}ber ein \"o{}konomisches {G}lei\-chungs\-sys\-tem und eine
  {V}erallgemeinerung des {B}rouwerschen {F}ixpunktsatzes.
\newblock In K.~Menger, editor, {\em {E}rgebnisse eines {M}athematischen
  {S}eminars}. 1938.

\bibitem{Samuelson76}
P.~A. Samuelson.
\newblock The periodic turnpike theorem.
\newblock {\em Nonlinear Analysis: Theory, Methods \& Applications},
  1(1):3--13, 1976.

\bibitem{Trelat15a}
E.~Tr{\'e}lat and E.~Zuazua.
\newblock The turnpike property in finite-dimensional nonlinear optimal
  control.
\newblock {\em Journal of Differential Equations}, 258(1):81--114, January
  2015.

\bibitem{Vorotnikov12}
V.~I. Vorotnikov.
\newblock {\em Partial Stability and Control}.
\newblock Springer Science \& Business Media, 2012.

\bibitem{Zaslavski06}
A.~Zaslavski.
\newblock {\em Turnpike Properties in the Calculus of Variations and Optimal
  Control}, volume~80.
\newblock Springer, 2006.

\end{thebibliography}
                                                   







\end{document}